\documentclass[11pt,a4paper]{amsart}
\usepackage{amsmath,amssymb,amsthm,amscd,verbatim,enumerate}
\usepackage{graphicx}
\usepackage{hyperref}

\addtolength{\textwidth}{1.4 truecm}
\addtolength{\textheight}{1 truecm}
\setlength{\voffset}{-.6 truecm}
\addtolength{\hoffset}{-0.7 truecm}

\parindent 0mm
\parskip   5mm

\newcommand{\spacing}[1]{
\renewcommand{\baselinestretch}{#1}
\setlength{\footnotesep}{\baselinestretch\footnotesep}}
\spacing{1.1}

\DeclareMathOperator{\up}{up}

\DeclareMathOperator{\pw}{pw}
\DeclareMathOperator{\FF}{FF}

\renewcommand{\l}{\ell}  

\newtheoremstyle{mystyle}{\parskip}{0mm}{\itshape}{}{\bf}{.}{.5em}{}
\renewenvironment{proof}[1][\proofname]{{\itshape #1. }}{\qed\\}

\theoremstyle{mystyle}
\newtheorem{theorem}{Theorem}[section]

\newtheorem{corollary}[theorem]{Corollary}

\newtheorem{claim}[theorem]{Claim}

\theoremstyle{definition}

\newcommand{\kk}[1]{$\mathbf{#1 + #1}$}

\renewcommand{\leq}{\leqslant}
\renewcommand{\geq}{\geqslant}

\newcommand{\DEF}{\sl}

\newcommand{\mc}{\mathcal}

\newcommand{\fig}[4]{
\begin{figure}
\centering
\includegraphics[width=#1\textwidth]{#2}
\caption{\label{#4}#3}
\end{figure}
}

\begin{document}

\title[First-Fit on Posets Without Two Long Incomparable Chains]{An Improved Bound for First-Fit on Posets\\ 
Without Two Long Incomparable Chains}

\author{Vida Dujmovi\'c}
\address{\newline School of Computer Science
\newline Carleton University
\newline Ottawa, Canada}
\email{vida@scs.carleton.ca}

\author{Gwena\"el Joret}
\address{\newline  D\'epartement d'Informatique 
\newline Universit\'e Libre de Bruxelles
\newline Brussels, Belgium}
\email{gjoret@ulb.ac.be}

\author{David~R.~Wood}
\address{\newline Department of Mathematics and Statistics
\newline The University of Melbourne
\newline Melbourne, Australia}
\email{woodd@unimelb.edu.au}

\thanks{
Vida Dujmovi\'c is supported by the Natural Sciences and Engineering Research Council (NSERC) of Canada,
and by an Endeavour Fellowship from the Australian Government.  
Gwena\"el Joret is a Postdoctoral Researcher of the Fonds
 National de la Recherche Scientifique (F.R.S.--FNRS), and is also
 supported by an Endeavour Fellowship from the Australian Government.
 David Wood is supported by a QEII Research Fellowship from the
 Australian Research Council}

\date{\today}
\sloppy
\maketitle

\begin{abstract}
It is known that the First-Fit algorithm for partitioning a poset $P$ into chains
uses relatively few chains when $P$ does not have two incomparable chains each of size $k$. 
In particular, if $P$ has width $w$ then
Bosek, Krawczyk, and Szczypka ({\it SIAM J. Discrete Math.}, 23(4):1992--1999, 2010)
proved an upper bound of $ckw^{2}$ on the number of chains used by First-Fit for some constant $c$, 
while Joret and Milans ({\it Order}, 28(3):455--464, 2011) gave one of $ck^{2}w$. 
In this paper we prove an upper bound of the form $ckw$.   
This is best possible up to the value of $c$.  
\end{abstract}

\section{Introduction}

For a poset $P$, the maximum size of an antichain in $P$ is called the 
{\DEF width} of $P$. Every partition of $P$ into chains contains at least $w$ chains, where $w$
is the width of $P$.  
By a classical theorem of Dilworth~\cite{D50}, there always exists a chain partition of $P$  
achieving this lower bound. While such an optimal chain partition can 
easily be computed (see for instance~\cite{S03A}), 
this computation requires a full knowledge of the poset $P$ and cannot be 
made {\em on-line}: In the on-line setting, elements of $P$ are uncovered one at a time
and a chain decomposition of the poset uncovered so far must be maintained
at all times. In this model, once an element is assigned to some 
chain it must remain assigned to that chain during the whole execution. 

Szemer\'edi proved
that every on-line algorithm can be forced to use $\Omega(w^{2})$ chains   
(see~\cite{BFKKMM, K_CM}). 
It is a well-known open problem to decide whether there exists
an on-line algorithm that uses polynomially many chains (in $w$). 
The current best bound, 
due to Bosek and Krawczyk~\cite{BK_FOCS},  
is sub-exponential: $w^{16\log_{2}w}$. 

First-Fit is a natural on-line algorithm for partitioning
a poset $P$ into chains: 
Each time a new element $v$ is uncovered, First-Fit puts $v$ into the first chain
in the current chain partition such that $v$ is comparable to all elements in
that chain. If no such chain is found, then a new chain containing only $v$ is added
at the end of the current chain partition. 

The performance of First-Fit on various classes
of posets has been studied extensively~\cite{
BFKKMM, BKM_eurocomb, BKS_SIDMA, BKT_mscript, CS_RAIRO, FKT_sub, K_CM,  
KQ_DM, S_PhD, NS_ORDER, PRV_SODA, PRV_TALG}. 
In particular, 
Kierstead~\cite{K_CM} showed that First-Fit can be forced to use an unbounded 
number of chains even on posets of width $2$.  Nevertheless, First-Fit
behaves well on some restricted classes of posets. A prominent example 
are interval orders, for which   
Kierstead~\cite{K_SIDMA} obtained a linear bound of $40w$,
which was subsequently improved by Kierstead and Qin~\cite{KQ_DM} to $25.8w$, 
and then by Pemmaraju, Raman and Varadarajan~\cite{PRV_SODA, PRV_TALG} to $10w$.  
It was later shown by Brightwell, Kierstead and Trotter~\cite{BKT_mscript} and
by Narayanaswamy and Babu~\cite{NS_ORDER} that the proof method of Pemmaraju 
{\it et al.\ }actually gives a bound of $8w$. (This refined analysis is also
presented in the journal version~\cite{PRV_TALG} of~\cite{PRV_SODA}.)   
As for lower bounds, the best result to date is that, for every $\varepsilon > 0$, 
First-Fit can be forced to use at least $(5 - \varepsilon)w$ chains provided $w$ 
is large enough~\cite{S_PhD}.

By a well-known theorem of Fishburn~\cite{F_JMP}, interval orders are exactly
the posets not containing {\kk{2}} as an induced subposet, where {\kk{k}} 
denotes the poset consisting of two disjoint chains $A,B$ with $|A|=|B|=k$ 
where every element in $A$ is incomparable with every element in $B$. 
It is therefore natural to ask to which extent the good performance
of First-Fit on interval orders extends to posets without {\kk{k}} where $k \geq 2$.  
This question was first considered by Bosek, Krawczyk and Szczypka~\cite{BKS_SIDMA}, 
who proved an upper bound of $3kw^{2}$ on the number of chains used 
by First-Fit. Joret and Milans~\cite{JM_Order} subsequently showed an upper bound of
$8(k-1)^{2}w$, which is asymptotically better when $k$ is fixed.  
However, the two bounds are incomparable if $k$ and $w$ are independent variables. 

The main result of this paper is that a linear dependency in $k$ and in $w$ 
can be guaranteed simultaneously: For $k\geq2$, First-Fit partitions
every poset of width $w$ without {\kk{k}} into at most $8(2k-3)w$ chains. 
We also give an example where
First-Fit uses $(k-1)(w-1)$ chains on such posets, implying that our
upper bound is within a constant factor of optimal. 

Our proof of the upper bound is comprised of two steps.  
First we prove that the incomparability graph 
of every poset of width $w$ without {\kk{k}} has small pathwidth, 
namely pathwidth at most $(2k-3)w-1$.    
Then we show that the fact  
that First-Fit uses at most $8w$ chains on interval orders of width $w$, 
as proved in~\cite{PRV_TALG},     
implies that First-Fit uses at most $8(p+1)$ chains on posets whose incomparability 
graphs have pathwidth $p$. 
Combining these two results, we obtain an upper bound of $8(2k-3)w$ 
on the number of chains used by First-Fit on posets 
of width $w$ without {\kk{k}}. 

\section{Definitions}
\label{sec:definitions}

A {\DEF chain} (respectively, {\DEF antichain}) in a poset $P$ is a set of pairwise 
comparable (incomparable) elements in $P$. 
The maximum size of an antichain in $P$ is called the {\DEF width} of $P$. 
An element $v$ is {\DEF minimal} (respectively, {\DEF maximal}) in $P$ if there is no element
$w$ in $P$ such that $w < v$ in $P$ ($w > v$ in $P$).  

The {\DEF incomparability graph} of a poset $P$ is the graph with vertex set
the elements of $P$  where two distinct vertices are adjacent 
if and only if the corresponding elements are incomparable in $P$.

A {\DEF First-Fit chain partition} of a poset $P$ is a sequence $C_{1}, \dots, C_{q}$
of non-empty disjoint chains of $P$ such that every element of $P$ is in one of the chains, and
for each $i,j$ such that $1\leq i < j \leq q$ and each $v\in C_{j}$, there exists $w \in C_{i}$ 
such that $v$ and $w$ are incomparable in $P$. 
Observe that every chain partition produced by the First-Fit algorithm is
a First-Fit chain partition, and conversely every First-Fit chain partition
can be produced by First-Fit. 

A {\DEF First-Fit coloring} of a graph $G$  
is a coloring of the vertices of $G$ with positive integers such that 
every vertex $v\in V(G)$ that is colored $i \geq 2$ has a neighbor colored $j$ 
for every $j \in \{1, \dots, i-1\}$. The maximum number of colors in a 
First-Fit coloring of $G$ is denoted $\FF(G)$. 
Note that a First-Fit chain partition of a poset $P$ can equivalently
be seen as a First-Fit coloring of the incomparability graph of $P$. 

Every (finite) set $\mathcal{I}$ of closed intervals of the real line
defines a corresponding poset $P$ as follows: $P$ has one element per interval in 
$\mathcal{I}$, and $u < v$ in $P$ if and only if $I(u) = [a,b]$ and $I(v) = [c, d]$
with $b < c$, where $I(u)$ denotes the interval corresponding to $u$. 
The set $\mathcal{I}$ is said to be an {\DEF interval representation}
of $P$. A poset $P$ is an {\DEF interval order} if and only if $P$ has
an interval representation.

Such a set $\mathcal{I}$ also defines a corresponding graph $G$, namely the intersection
graph of the intervals in $\mathcal{I}$. Thus $G$ has one vertex per interval, 
and two distinct vertices are adjacent if and only if the corresponding intervals 
intersect. Similarly as above, $\mathcal{I}$ is said to be an {\DEF interval representation}
of $G$. A graph is an {\DEF interval graph} if it has an interval representation.   
Clearly, the incomparability graph of an interval order is an interval graph, 
and conversely every interval graph is the incomparability graph of some interval order.  

A {\DEF path decomposition} of a graph $G$ is a sequence $B_{1}, \dots, B_{k}$ 
of vertex subsets of $G$ (called {\DEF bags}) such that each
vertex of $G$ appears in a non-empty consecutive set of bags, and
each edge of $G$ has its two endpoints in at least one bag. 
The {\DEF width} of the decomposition is the maximum cardinality of a bag minus one. 
The {\DEF pathwidth} $\pw(G)$ of $G$ is the minimum width of a path decomposition of $G$.  
Note that the pathwidth of $G$ can equivalently be defined as the minimum integer $k$ 
such that $G$ is a spanning subgraph of an interval graph $H$ with $\omega(H)=k+1$ 
(where $\omega(H)$ denotes the maximum cardinality of a clique in $H$).

\section{Proofs}
\label{sec:proofs}

\subsection{Pathwidth of posets without {\kk{k}}}

A poset $P$ {\DEF extends} (or 
{\DEF is an extension of}) a poset $Q$ if
$P$ and $Q$ have the same set of elements and  
$u < v$ in $Q$ implies $u < v$ in $P$ for all elements $u,v$.

\begin{theorem}
\label{thm:extension}
For $k\geq 2$, 
every poset $P$ of width $w$ without {\kk{k}} extends 
some interval order $Q$ of width at most $(2k-3)w$. 
\end{theorem}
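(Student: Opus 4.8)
The plan is to build the interval order $Q$ by assigning to each element $v$ of $P$ a closed interval $I(v)$, in such a way that the interval‐order relation induced by the $I(v)$'s is a sub‐relation of the order of $P$ (so that $P$ extends $Q$), while the width of $Q$ stays below $(2k-3)w$. The natural coordinates to use are ``how deep is $v$ from the top'' and ``how deep is $v$ from the bottom''. Concretely, fix a partition of $P$ into $w$ chains $\mathcal{C}$ (which exists by Dilworth). For an element $v$, let $\down(v)$ be the maximum size of a chain in $P$ with top element $v$, and let $\up(v)$ be the maximum size of a chain with bottom element $v$; these are standard ``height'' and ``co‑height'' functions. If $u<v$ in $P$ then $\down(u)<\down(v)$ and $\up(u)>\up(v)$, so the map $v\mapsto(\down(v),-\up(v))$ is order‑preserving and injective on each chain. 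The idea is to let $I(v)$ run from a left endpoint governed by $\down(v)$ to a right endpoint governed by $\down(v)$ together with a correction term that encodes ``how long a chain above $v$ can be while still being incomparable with some long chain elsewhere'', exploiting the absence of \kk{k}.

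First I would set up the key combinatorial consequence of excluding \kk{k}: if $v$ is the bottom of a chain of size $\geq k$ that lies entirely above $v$ in $P$ (i.e.\ $\up(v)\geq k$), then $v$ cannot be incomparable with the bottom of any other chain of size $\geq k$; more generally I would prove a bound, for each chain $C\in\mathcal{C}$ and each element $v$, on the number of elements $u$ of $C$ with $u$ incomparable to $v$ and $\up(u)\ge k$ (respectively $\down(u)\ge k$), showing it is small — bounded by something like $2k-3$ in total across all $w$ chains, after one is careful about double counting. This is where the \kk{k} hypothesis does all its work, and it is the step I expect to be the main obstacle: one has to show that two elements that are both ``high'' (long chain above) and both ``low'' (long chain below) and mutually incomparable would create a \kk{k}, and then balance the bookkeeping so the constant comes out as $2k-3$ rather than something quadratic in $k$.

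With that in hand, I would define, for each $v$, a left endpoint $\ell(v)$ and right endpoint $r(v)$ roughly as follows: $\ell(v)$ is an integer determined by $\down(v)$ — say $\ell(v) = (2k-3)\cdot(\text{something comparable to }\down(v))$ — perturbed within a block of length about $2k-3$ so that among elements with the same value of $\down$ the left endpoints are all distinct and ordered; and $r(v)$ is chosen so that $r(v)\ge \ell(v)$ always, and $r(u)<\ell(v)$ whenever $u<v$ in $P$, which forces $b<c$ in the interval‐order definition and hence gives $u<v$ in $Q\implies u<v$ in $P$. The correction allowing $r(v)$ to reach past $\ell(v)$ is paid for precisely by the bounded ``high/low incomparability'' count from the previous paragraph, so that at most $(2k-3)w$ intervals pairwise overlap at any point of the line; since the width of an interval order equals the maximum over points of the number of intervals containing that point (an antichain in the interval order is a clique in the interval graph), this bounds the width of $Q$ by $(2k-3)w$.

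Finally I would verify the two required properties: (i) $Q$ is an interval order with the claimed width — immediate from the construction together with the clique‑bound argument; and (ii) $P$ extends $Q$ — that is, $u<v$ in $Q$ implies $u<v$ in $P$, which is exactly the condition that every pair of intervals with $I(u)$ entirely to the left of $I(v)$ has $u<v$ in $P$, and this I would get from the contrapositive: if $u$ and $v$ are incomparable in $P$, arrange that $I(u)$ and $I(v)$ intersect, using that incomparable elements either share a nearby $\down$‑value or are separated by only a bounded amount because of the \kk{k}‑free correction terms. The routine part is checking the interval arithmetic case by case (comparable pair inside one chain, comparable pair across chains, incomparable pair); the conceptual content is entirely in the counting lemma of the second paragraph, and I would expect the final constant $2k-3$ to emerge from being slightly more clever there than the naive bound of $2(k-1)=2k-2$.
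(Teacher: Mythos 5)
There is a genuine gap, and it sits exactly where you predicted the ``main obstacle'' would be: the counting lemma you plan to extract from the absence of \kk{k} is false as stated, in every version you sketch. Knowing that $\up(u)\geq k$ and $\up(v)\geq k$ for two incomparable elements $u,v$ does not produce a \kk{k}, because the long chains witnessing these heights need not be incomparable to the \emph{other} element --- they can even pass through common elements. Concretely, take $u,v$ incomparable and a chain $z_{1}<z_{2}<\dots<z_{k-1}$ with $u<z_{1}$ and $v<z_{1}$: this poset is an interval order (so it has no \kk{2}, hence no \kk{k}), yet both $u$ and $v$ are bottoms of chains of size $k$. Likewise, your quantitative version fails badly: already for $k=2$, an interval $[0,10]$ can be incomparable to arbitrarily many members $[i,i+\tfrac12]$ of a single chain, all of which have long chains above and below them through shared elements, so the number of such $u$ is not bounded by $2k-3$ (which would be $1$) or by any function of $k$. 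The hypothesis ``no \kk{k}'' only forbids two chains of size $k$ that are \emph{pairwise completely} incomparable; height and co-height functions do not see this, so no static interval assignment driven by $\down(v)$ and $\up(v)$ with bounded ``correction terms'' can be justified along the lines you describe.

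What the actual proof needs, and what your plan has no substitute for, is a dynamic argument. The paper keeps a ``block'' consisting of $2k-3$ consecutive elements from each chain of a Dilworth partition and slides it upward one element at a time; each element's interval is the set of blocks containing it, so the width bound $(2k-3)w$ is automatic, and the extension property reduces to showing that the element released at each step lies below \emph{everything} strictly above the current block. Proving that such a ``good'' element always exists is the real content: one builds, inside the block and just above it, two $k$-element chains $L_{i}$ and $U_{j}$ per pair of chains, defines an auxiliary digraph recording the failures $a_{i}\nless d_{j}$, and shows this digraph is acyclic by propagating comparabilities around a hypothetical cycle using the \kk{k}-freeness (this is where the block size $2k-3$ is used, to have $k$ elements available). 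A sink of this digraph yields the good element. Your proposal replaces this global, adaptive choice by local per-element data, and the counterexamples above show that local data cannot suffice; to repair the argument you would essentially have to reintroduce the block-and-sink mechanism.
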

\begin{proof} 
Let $C_{1}, \dots, C_{w}$ be a partition 
of $P$ into $w$ chains (which exists by Dilworth's theorem).
A subset $X$ of elements of $P$ will be called a {\DEF block} if 
$|X \cap C_{i}| \geq \min\{|C_{i}|, 2k-3\}$ and 
the elements in $X \cap C_{i}$ are consecutive
in the chain $C_{i}$ for every $i \in \{1, \dots, w\}$.  
Given a block $X$, the set $\up(X)$ is defined as the set 
of all elements $y$ of $P$ such that $y\in C_{i} - X$ for some 
$i \in \{1, \dots, w\}$ and $y > x$ for every $x \in X \cap C_{i}$.  
An element $u\in X$ is 
{\DEF good} if $u < v$ in $P$ for every $v \in \up(X)$. 

With a slight abuse of terminology, we say that an element $u$ of a set $X$ 
is {\DEF minimal in $X$} ({\DEF maximal in $X$})
if $u$ is a minimal (maximal, respectively) element of the poset induced by $X$. 

\begin{claim}
\label{claim:good}
If $X$ is a block with $\up(X) \neq \varnothing$, then
there is an index $i \in \{1, \dots, w\}$ such that
$\up(X) \cap C_{i} \neq \varnothing$  and the minimal element of $X\cap C_{i}$ is good.
\end{claim}
\begin{proof}
Reindexing the chains $C_{1}, \dots, C_{w}$ if necessary, we may assume
that there is an index $w'$ such that $\up(X) \cap C_{i} \neq \varnothing$
if and only if $i \in \{1, \dots, w'\}$. 
(Note that $w' \geq 1$, since otherwise $\up(X)$ would be empty.) 

Let $i\in \{1, \dots, w'\}$. Let $d_{i}$ be the minimal element in $\up(X) \cap C_{i}$. 
The set $(X \cap C_{i}) \cup\{d_{i}\}$ 
is a chain of size $2k-2 \geq k$; let $L_{i}$ 
be the set of the $k$ smallest elements in that chain,  
and let $a_{i}$ and $c_{i}$ be the minimal and maximal elements in $L_{i}$, respectively. 
Also let $b_{i}$ be the maximal element in $L_{i} - \{c_{i}\}$.  
Observe that $a_{i} \leq b_{i} < c_{i} \leq d_{i}$ in $P$. 
Let $U_{i} := (C_{i} \cap (X - L_{i})) \cup \{b_{i}, c_{i}, d_{i}\}$. 
Notice that $a_{i}=b_{i}$ and $c_{i} = d_{i}$ if $k=2$, while $a_{i} \neq b_{i}$ and $c_{i} \neq d_{i}$ if $k \geq 3$. In particular, we have $|U_{i}|=k$ in all cases. 

Define a directed graph $D$ with vertex set $V := \{1, \dots, w'\}$ as follows:  
For every $i,j\in V$, $i\neq j$, add an arc $(i, j)$ if $a_{i} \nless d_{j}$ in $P$. 
We claim that $D$ has no directed cycle. Arguing by contradiction, suppose 
$D$ has a directed cycle, and let $p$ denote its length. Reindexing the chains 
$C_{1}, \dots, C_{w'}$ if necessary, we may assume that the vertices of this cycle are
$1, 2, \dots, p$ (in order). 

Let us show that $c_{1} > b_{i}$ in $P$ for each $i \in \{2, \dots, p\}$, by induction on $i$. 
For the base case $i=2$, consider the two disjoint chains $L_{1}$ and $U_{2}$. Since
$|L_{1}| = |U_{2}| = k$ and since $P$ has no {\kk{k}}, some element $u \in L_{1}$ is comparable
to some element $v\in U_{2}$. Since $a_{1} \leq u$ and $v \leq d_{2}$ in $P$ but $a_{1} \nless d_{2}$ 
because of the arc $(1, 2)$ in $D$, we cannot have $u < v$. Thus $u > v$ in $P$, and since 
$c_{1} \geq u$ and $v \geq b_{2}$ this implies $c_{1} > b_{2}$. For the inductive step, 
assume $i \geq 3$. Since $c_{1} > b_{i-1}$, the set $L'_{i-1}:=\{c_{1}\} \cup (L_{i-1} - \{c_{i-1}\})$ is
a chain of $k$ elements. Since $L'_{i-1}$ and $U_{i}$ are disjoint, 
some element $u \in L'_{i-1}$ is comparable to some element $v\in U_{i}$. 
Similarly as before, we cannot have $u<v$ in $P$, because $a_{i-1} \leq u$ and $v \leq d_{i}$ 
but $a_{i-1} \nless d_{i}$. Hence $u > v$, implying $c_{1} > b_{i}$, as desired. 

Now, since $c_{1} > b_{p}$ in $P$, it follows that $d_{1} > a_{p}$. However, this 
contradicts the existence of the arc $(p, 1)$ in $D$. Thus $D$ has no directed cycle, as claimed.  

Since $D$ is acyclic, there exists a vertex $i \in V$ that has no outgoing arc. 
By the definition of $D$, 
this means that $a_{i} < d_{j}$ in $P$ for each $j \in \{1, \dots, w'\} - \{i\}$. 
Clearly, $a_{i} < d_{i}$ also holds. Therefore, 
$a_{i} < y$ for every $j \in \{1, \dots, w'\}$  
and every $y \in \up(X) \cap C_{j}$, implying that $a_{i}$ is good.  
\end{proof}

Define a sequence $B_{1}, \dots, B_{q}$ of blocks iteratively as follows: 
Let $B_{1}$ be the block obtained by taking the 
union of the $\min\{2k-3, |C_{i}|\}$ smallest elements in chain $C_{i}$ 
for every $i \in \{1, \dots, w\}$. For $j \geq 2$, if $\up(B_{j-1}) = \varnothing$ 
then we stop the process and $B_{j-1}=B_{q}$ becomes the last block of the sequence.   
Otherwise, let $u$ be a good element of $B_{j-1}$ as in Claim~\ref{claim:good}. Thus    
$u \in B_{j-1} \cap C_{i}$ for some $i \in \{1, \dots, w\}$ such that  
$\up(B_{j-1}) \cap C_{i} \neq \varnothing$.
Let $v$ be the smallest element of chain $\up(B_{j-1}) \cap C_{i}$, 
and let $B_{j} := (B_{j-1} - \{u\}) \cup \{v\}$.  

Observe that every element $u$ of $P$ appears in consecutive blocks of the
sequence $B_{1}, \dots, B_{q}$; let $I(u)$ be the closed interval $[i, j]$
of the real line where $i \leq j$ are indices such that $u$ is included
in precisely the blocks $B_{i}, B_{i+1}, \dots, B_{j}$ of the sequence. 
These intervals define an interval order $Q$ on the elements of $P$, where
$u < v$ in $Q$ if and only if $I(u) = [i,j]$ and $I(v)=[i',j']$ with $j < i'$. 
Every antichain $A$ of $Q$ corresponds to a set of pairwise intersecting intervals. 
By the Helly property of intervals, the latter intervals share a common point, which
implies that there is an index $i \in \{1, \dots, q\}$ such that $A \subseteq B_{i}$. 
Conversely, every block $B_{i}$ is an antichain of $Q$. 
It follows that the width of $Q$ is equal to $\max\{|B_{i}|: 1 \leq i \leq w\} \leq (2k-3)w$. 

Now, if $u < v$ in $Q$, then $j < i'$ where $I(u) = [i,j]$ and $I(v)=[i',j']$, 
and in particular $v \in \up(B_{j})$ by the definition of the blocks. 
Since $u \notin B_{j+1}$, it follows that $u$ is a good vertex of $B_{j}$, and
hence $u < y$ in $P$ for every $y \in \up(B_{j})$. In particular, $u < v$ in $P$.
Hence $P$ extends $Q$, and therefore $Q$ is an interval order as desired.  
\end{proof}

\begin{corollary}
\label{cor:pathwidth}
For $k \geq 2$, the incomparability graph $G$ of a poset $P$ of width $w$ without {\kk{k}} 
has pathwidth at most $(2k-3)w - 1$. 
\end{corollary}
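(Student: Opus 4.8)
The plan is to deduce this directly from Theorem~\ref{thm:extension} together with the characterisation of pathwidth in terms of interval supergraphs recalled in Section~\ref{sec:definitions}.

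First I would apply Theorem~\ref{thm:extension} to obtain an interval order $Q$ of width at most $(2k-3)w$ such that $P$ extends $Q$. The next step is to relate the incomparability graph $G$ of $P$ to the incomparability graph $H$ of $Q$. Since $P$ extends $Q$, every pair of elements that is comparable in $Q$ is also comparable in $P$; equivalently, every pair that is incomparable in $P$ is incomparable in $Q$. Hence $E(G) \subseteq E(H)$, so $G$ is a spanning subgraph of $H$. Moreover, since $Q$ is an interval order, $H$ is an interval graph.

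It then remains to bound $\omega(H)$. A clique of $H$ is exactly a set of pairwise incomparable elements of $Q$, that is, an antichain of $Q$; hence $\omega(H)$ equals the width of $Q$, which is at most $(2k-3)w$. Now the alternative definition of pathwidth from Section~\ref{sec:definitions} --- namely that $\pw(G)$ is the least integer $p$ for which $G$ is a spanning subgraph of an interval graph with clique number $p+1$ --- immediately yields $\pw(G) \le \omega(H) - 1 \le (2k-3)w - 1$, as desired.

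There is essentially no obstacle here: the entire content of the corollary is carried by Theorem~\ref{thm:extension}, and the remaining steps are just unwinding definitions (extension implies containment of incomparability graphs; the clique number of an incomparability graph equals the width of the poset; pathwidth via interval supergraphs). If one preferred a more self-contained argument avoiding the interval-supergraph characterisation, one could instead take the interval representation of $Q$ guaranteed by it being an interval order and read off a path decomposition of $G$ directly from it --- ordering the bags by the left endpoints of the intervals --- but invoking the stated characterisation is cleaner.
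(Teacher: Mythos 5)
Your proposal is correct and is essentially identical to the paper's own proof: both apply Theorem~\ref{thm:extension}, observe that $G$ is a spanning subgraph of the incomparability graph $H$ of $Q$, note $\omega(H)$ equals the width of $Q$, and conclude via the interval-supergraph characterisation of pathwidth. (The paper additionally remarks, parenthetically, that the blocks $B_1,\dots,B_q$ from the proof of Theorem~\ref{thm:extension} directly give a path decomposition, which matches your alternative closing remark.)
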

\begin{proof}
Using Theorem~\ref{thm:extension}, let $Q$ be an interval order of width at most
$(2k-3)w$ such that $P$ extends $Q$. Then the incomparability graph $H$ of $Q$ is
an interval graph with $\omega(H) \leq (2k-3)w$ such that $G \subseteq H$. 
Therefore, $G$ has pathwidth at most $(2k-3)w -1$.  
(Note that the sequence $B_{1}, \dots, B_{q}$ of blocks defined in the proof 
of Theorem~\ref{thm:extension} provides a path decomposition of $G$ of width at most $(2k-3)w -1$.) 
\end{proof}

The poset consisting of $w$ pairwise incomparable chains
each of size $k-1$ has width $w$ and no {\kk{k}}, and its
incomparability graph is the complete $w$-partite graph with $k-1$
vertices in each color class, which has pathwidth $(k-1)(w-1)$ 
(see for instance~\cite[Lemma~8.2]{FW_EJC}).  
Thus, asymptotically, the 
bound in Corollary~\ref{cor:pathwidth} is within a factor of $2$ of optimal.

\subsection{Proof of upper bound} 

As mentioned in the introduction, 
Pemmaraju, Raman and Varadarajan~\cite{PRV_SODA} proved that 
First-Fit partitions every interval order of width $w$ into at most $10w$ 
chains, and this bound can be decreased to $8w$~\cite{PRV_TALG, NS_ORDER}. 
Thus $\FF(G) \leq 8\omega(G)$ for every interval graph $G$. 
While every graph $G$ with pathwidth $p$ is a spanning subgraph of an interval graph $H$ 
with $\omega(H) = p+1$, this does not immediately imply  
that $\FF(G) \leq 8(\pw(G) + 1)$. Indeed 
the invariant $\FF(G)$ is not monotone with respect to taking subgraphs;  
for instance, observe that $\FF(P_{4}) = 3 > 2 = \FF(C_{4})$. 
And to emphasize the point, $\FF(K_{n,n})=2$ but 
$\FF(K_{n,n} - M)=n$, where $M$ denotes a perfect matching of $K_{n,n}$.  
However, it turns out that the aforementioned upper bound on $\FF(G)$ 
in terms of the pathwidth of $G$ holds, as we now show.\footnote{
It should be noted that the invariant $\FF(G)$ on graphs $G$ of bounded pathwidth has been 
explicitly considered in~\cite{DMcC_IWPEC, DMcC_JCSS}. However it appears that the authors implicitly 
assumed that $\FF(G)$ is monotone with respect to subgraph inclusion when writing 
that upper bounds on $\FF(G)$ when $G$ is an interval graph with $\omega(G) \leq p+1$ 
immediately carry over to graphs $G$ with $\pw(G) \leq p$ (see~\cite[p.\ 22]{DMcC_IWPEC} 
and~\cite[p.\ 64]{DMcC_JCSS}).  
Note  that, by Theorem~\ref{thm:homomorphism},  
the maximum of $\FF(G)$ over all graphs $G$ with pathwidth at most $p$ is 
indeed always achieved by some interval graph $G$.  
}   

First, recall that a {\DEF homomorphism} from a graph $G$
to a graph $H$ is a function $f:V(G) \to V(H)$ that maps edges of $G$ to 
edges of $H$, that is,  
$f(u)f(v) \in E(H)$ for every edge $uv \in E(G)$. The graph
$G$ is said to be {\DEF homomorphic} to $H$ if such a mapping exists.

\begin{theorem}
\label{thm:homomorphism}
Every graph $G$ with pathwidth $p$ is homomorphic to an interval graph $H$ 
with $\omega(H) \leq p + 1$ and $\FF(G) \leq \FF(H)$. 
\end{theorem}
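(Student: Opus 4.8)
The plan is to take a fixed path decomposition $B_1, \dots, B_m$ of $G$ of width $p$ and use it to construct an interval graph $H$ together with a homomorphism $f \colon G \to H$, in such a way that any First-Fit coloring of $G$ can be ``transported'' to a First-Fit coloring of $H$ using at least as many colors. The natural idea is that $H$ should record, for each vertex $v$ of $G$, the interval of bags containing $v$; if two vertices of $G$ occupy overlapping bag-intervals we want their images in $H$ to be adjacent, so that edges are preserved, while vertices with the same bag-interval can safely be identified. Concretely, I would first massage the path decomposition so that it is ``nice'': every bag has size exactly $p+1$, consecutive bags differ by exactly one vertex (one vertex leaves, one enters), and no two vertices share the same interval of bags. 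This can be arranged by standard padding/splitting arguments without increasing the width, and it guarantees that distinct vertices of $G$ get distinct intervals.

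Next I would define $H$ to be the interval graph whose vertices are the distinct bag-intervals arising from $G$, with two such intervals adjacent iff they intersect, and let $f$ send each $v \in V(G)$ to its bag-interval. Since any edge $uv$ of $G$ forces $u$ and $v$ to appear together in some bag, their intervals intersect, so $f$ is a homomorphism. A clique in $H$ corresponds to pairwise intersecting intervals, which by the Helly property all contain a common bag index, so $\omega(H) \leq \max_i |B_i| = p+1$, as required. The remaining and genuinely substantive step is the inequality $\FF(G) \leq \FF(H)$: I would start from a First-Fit coloring $c$ of $G$ witnessing $\FF(G)$ colors, and build a First-Fit coloring $c'$ of $H$ with at least that many colors. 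The guiding intuition is that for a vertex $x$ of $H$ corresponding to interval $[a,b]$, its $H$-neighborhood is ``larger'' than the $G$-neighborhood of any preimage whose interval is contained in $[a,b]$ — so we should be able to copy over the colors that make First-Fit work.

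The main obstacle is precisely this monotonicity-style argument, since the example $\FF(K_{n,n}) = 2 < n = \FF(K_{n,n} - M)$ shows one cannot naively expect colorings to lift along homomorphisms; the construction of $H$ must be exploited carefully. I expect the right approach is to process the bags left to right and, bag by bag, decide the color $c'(x)$ of each interval-vertex $x$ by looking at the preimages of $x$ in $G$ together with the already-chosen colors $c$ on $G$, essentially choosing $c'(x)$ so that it ``agrees'' with the First-Fit behaviour of $c$ on some vertex of $G$ living in a nested interval. One clean way to organize this: for each color class of $c'$ that we want to realize, exhibit a vertex of $G$ with that color and argue that the First-Fit constraint for $c'$ at $x$ (every smaller color appears among $H$-neighbors of $x$) is implied by the First-Fit constraint for $c$ at a suitable preimage, using that the preimage's $G$-neighbors all have images adjacent to $x$ in $H$. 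Checking that this assignment is simultaneously well-defined (the same interval may have several preimages with different $c$-colors) and genuinely First-Fit — possibly after reindexing/renaming colors to restore the ``all smaller colors present'' property — is where the real work lies, and I would expect to need the niceness of the path decomposition (in particular that each step adds and removes exactly one vertex) to make the bookkeeping go through.
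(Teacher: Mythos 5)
Your construction of $H$ and the verification that $f$ is a homomorphism with $\omega(H)\leq p+1$ are fine, but they are the easy half. The substantive claim $\FF(G)\leq\FF(H)$ is exactly the point where your proposal stops being a proof: you explicitly defer it to a bag-by-bag recoloring scheme whose well-definedness and First-Fit property you acknowledge you have not checked, and no concrete mechanism is given for why it should succeed. Worse, the way you set things up makes the task essentially equivalent to the false-in-general monotonicity statement. After your ``nice'' normalization each vertex of $G$ gets its own interval, so $f$ is injective and $H$ is (up to isomorphism) just an interval supergraph $G'$ of $G$ on the same vertex set with $\omega(G')=p+1$; the claim you need is then $\FF(G)\leq\FF(G')$ for a graph obtained from $G$ by adding edges. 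If you try the obvious move of transporting a First-Fit coloring of $G$ to $G'$ unchanged, the downward condition (every smaller color appears in the neighborhood) is trivially preserved, but \emph{properness} can fail: the added fill-in edges may join two vertices of the same color class. That is the real obstruction hiding behind the $K_{n,n}$ versus $K_{n,n}-M$ example, and your sketch does not name it or address it.

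The paper's proof resolves this with an idea that is absent from your proposal: the target interval graph $H$ is built \emph{from the First-Fit coloring itself}, not from the path decomposition alone. Fix a First-Fit coloring $V_1,\dots,V_c$ of $G$, take an interval supergraph $G'$ with $\omega(G')=p+1$, and for each color class $V_i$ merge each connected component of $G'[V_i]$ into a single vertex of $H$, represented by the union of its members' intervals (an interval, by connectivity). Distinct components of the same class have disjoint union intervals, so the induced coloring $Z_1,\dots,Z_c$ of $H$ is proper; the downward First-Fit condition descends through the quotient map because any $G$-neighbor of $v$ in a smaller class maps to an $H$-neighbor of $f(v)$; and the Helly argument still bounds $\omega(H)$ by $\omega(G')=p+1$. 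Note that with this construction $f$ is deliberately \emph{not} injective and $H$ is not a supergraph of $G$ --- that is what sidesteps the non-monotonicity of $\FF$. Unless your left-to-right recoloring can be made to reproduce something like this quotient (and as written it cannot, since it never merges same-colored vertices and never explains how properness is restored without losing colors), the proposal has a genuine gap at its central step.
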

\begin{proof}
Consider a First-Fit coloring of $G$ with $c:= \FF(G)$ colors, and let $V_{1}, \dots, V_{c}$
denote the corresponding color classes (in order). 
Let $G'$ be an interval graph with $\omega(G') = p+1$ that is a spanning supergraph of $G$. 
Let $\mc{I}=\{I(v): v \in V(G')\}$ be an interval representation of $G'$, where
$I(v)$ denotes the interval corresponding to vertex $v$. 
For each $i \in \{1, \dots, c\}$, let $W_{i,1}, \dots, W_{i, n_{i}}$ denote the components 
of the graph $G'[V_{i}]$, and let further $I_{i,j}:= \cup\{I(v): v \in W_{i,j}\}$ for each
$j\in \{1, \dots, n_{i}\}$. 
Observe that $I_{i,j}$ is again an interval since $G'[W_{i,j}]$ is connected. 
Let $H$ be the interval graph defined by the latter intervals, and let $v_{i,j}$ denote
the vertex of $H$ corresponding to interval $I_{i,j}$, for each $i,j$ such that $1 \leq i \leq c$ 
and $1 \leq j \leq n_{i}$.

Let $f:V(G) \to V(H)$ be the function that maps each vertex $v$ of $G$ to vertex
$v_{i,j}$ of $H$ where $i,j$ is the unique pair of indices such that $v \in W_{i,j}$. 
Clearly $f$ is a homomorphism from $G$ to $H$.  
Also note that the mapping $f$ is surjective.  
Now consider an arbitrary clique $C$ of $H$.  
By the Helly property of intervals, there is a point $x$ on the real line 
that is contained in all the intervals corresponding to vertices in $C$.  
For each interval $I_{i,j}$ such that $v_{i,j} \in C$, 
there is at least one vertex in $W_{i,j}$ whose corresponding interval contains the point $x$. 
Thus choosing one such vertex of $G'$ for each vertex in $C$, we obtain a clique $C'$ of 
$G'$ with $|C'| = |C|$. It follows that $\omega(H) \leq \omega(G') = p+1$. 

Finally consider the coloring of $H$ with $c$ colors obtained by letting for $i=1, \dots, c$ 
the $i$-th color class be $Z_{i}:=\{v_{i,j}: 1 \leq j \leq n_{i}\}$. (Observe that this is 
a proper coloring of $H$ since $I_{i,j} \cap I_{i,j'} = \varnothing$ for $j\neq j'$.)
For each $i,j$ such that $1 \leq j < i$,  
each vertex $v\in V_{i}$ is adjacent in $G$ to some vertex $w\in V_{j}$, since 
$V_{1}, \dots, V_{c}$ is a First-Fit coloring of $G$, and thus
$f(v)$ is adjacent to $f(w)$ in $H$ (where $f$ is the homomorphism defined above). 
Since $f(v) \in Z_{i}$ and $f(w) \in Z_{j}$, 
it follows that every vertex in $Z_{i}$ has a neighbor in $Z_{j}$ in $H$ for
every $i,j$ such that $1 \leq j < i \leq c$, that is, 
$Z_{1}, \dots, Z_{c}$ is a First-Fit coloring of $H$. 
Hence $\FF(H) \geq c = \FF(G)$, and therefore $H$ is an interval graph
with the desired properties. 
\end{proof}

Theorem~\ref{thm:homomorphism} and the aforementioned bound of 
$\FF(G) \leq 8\omega(G)$ for interval graphs $G$ imply: 

\begin{corollary}
\label{cor:homomorphism}
$\FF(G) \leq 8(\pw(G) + 1)$ for every graph $G$. 
\end{corollary}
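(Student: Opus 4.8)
The plan is to combine the two results already established. By Theorem~\ref{thm:homomorphism}, every graph $G$ with pathwidth $p := \pw(G)$ is homomorphic to an interval graph $H$ with $\omega(H) \leq p+1$ and, crucially, $\FF(G) \leq \FF(H)$. So it suffices to bound $\FF(H)$ for this particular interval graph $H$.

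For that I would invoke the quoted bound of Pemmaraju, Raman and Varadarajan (in the refined form from~\cite{PRV_TALG, NS_ORDER}), which says First-Fit uses at most $8w$ chains on any interval order of width $w$. Restated in graph terms: $\FF(H) \leq 8\,\omega(H)$ for every interval graph $H$. (One should note the translation between the poset statement and the graph statement, using that every interval graph is the incomparability graph of some interval order, that width of the order equals $\omega$ of the interval graph, and that a First-Fit chain partition of the order is exactly a First-Fit coloring of the graph — all of which is recorded in Section~\ref{sec:definitions}.) Applying this to $H$ gives $\FF(H) \leq 8\,\omega(H) \leq 8(p+1)$.

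Chaining the two inequalities yields $\FF(G) \leq \FF(H) \leq 8(\pw(G)+1)$, which is the claim. There is essentially no obstacle here: all the work has been front-loaded into Theorem~\ref{thm:homomorphism} (which circumvents the non-monotonicity of $\FF$ under subgraphs) and into the cited interval-order bound. The only thing to be careful about is not to claim the naive implication "$G \subseteq H$ interval with $\omega(H)=p+1$ $\Rightarrow$ $\FF(G)\le 8(p+1)$", since $\FF$ is not subgraph-monotone (as the $P_4$ versus $C_4$ and $K_{n,n}$ versus $K_{n,n}-M$ examples in the text show); the homomorphism construction is what makes the argument valid. So the proof is just the two-line composition above.

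\begin{proof}
Let $p := \pw(G)$. By Theorem~\ref{thm:homomorphism}, there is an interval graph $H$ with
$\omega(H) \leq p+1$ and $\FF(G) \leq \FF(H)$. Since $H$ is an interval graph, it is the
incomparability graph of an interval order $Q$ of width $\omega(H)$, and a First-Fit coloring
of $H$ is the same thing as a First-Fit chain partition of $Q$. By the result of Pemmaraju,
Raman and Varadarajan~\cite{PRV_SODA} in the refined form of~\cite{PRV_TALG, NS_ORDER},
First-Fit partitions $Q$ into at most $8\,\omega(H)$ chains, that is, $\FF(H) \leq 8\,\omega(H)$.
Combining these,
\[
\FF(G) \;\leq\; \FF(H) \;\leq\; 8\,\omega(H) \;\leq\; 8(p+1) \;=\; 8(\pw(G)+1). \qedhere
\]
\end{proof}
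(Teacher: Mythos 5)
Your proof is correct and follows exactly the paper's route: the corollary is obtained by composing Theorem~\ref{thm:homomorphism} with the interval-graph bound $\FF(H)\leq 8\,\omega(H)$ from~\cite{PRV_TALG, NS_ORDER}, which is all the paper does. Your extra remarks on the poset/graph translation and the non-monotonicity caveat are accurate but not needed beyond what the paper already records.
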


Let us remark that, as observed by an anonymous referee, 
a recent result of Kierstead and Saoub~\cite{KS_11} on $p$-tolerance graphs  
is similar to Theorem~\ref{thm:homomorphism} (see~\cite[Claim~9]{KS_11}), and is proved 
using a similar proof technique. Also, while it is not stated in these terms, 
we note that Claim~10 in~\cite{KS_11} shows that   
every $p$-tolerance graph $G$ has pathwidth at most 
$\lceil \frac{1}{1-p} \rceil \omega(G) - 1$. Combining this 
with Corollary~\ref{cor:homomorphism} yields $\FF(G) \leq 8\lceil \frac{1}{1-p} \rceil \omega(G)$, 
which is Theorem~2 from~\cite{KS_11}.  

Since a First-Fit chain partition of a poset $P$ can equivalently be
seen as a First-Fit coloring of the incomparability graph of $P$, 
Corollary~\ref{cor:pathwidth} and Corollary~\ref{cor:homomorphism} together
imply the follow result. 

\begin{theorem}
\label{thm:main}
For $k \geq 2$, 
First-Fit partitions every poset $P$ of width $w$ without {\kk{k}} into 
at most $8(2k-3)w$ chains.
\end{theorem}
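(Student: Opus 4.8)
The plan is to obtain Theorem~\ref{thm:main} simply by chaining together the two structural results already established in the excerpt, since all the real work has been done in Theorem~\ref{thm:extension}/Corollary~\ref{cor:pathwidth} on one side and in Theorem~\ref{thm:homomorphism}/Corollary~\ref{cor:homomorphism} on the other. First I would fix $k\geq 2$ and a poset $P$ of width $w$ containing no {\kk{k}}, and let $G$ be its incomparability graph. By Corollary~\ref{cor:pathwidth}, $G$ has pathwidth at most $(2k-3)w-1$.

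Next I would feed this pathwidth bound into Corollary~\ref{cor:homomorphism}, which gives $\FF(G)\leq 8(\pw(G)+1)\leq 8\big((2k-3)w-1+1\big)=8(2k-3)w$. The only remaining point is the translation between colorings and chain partitions: as recalled in Section~\ref{sec:definitions}, a First-Fit chain partition of $P$ is exactly a First-Fit coloring of $G$, and the chain partition actually produced by the First-Fit algorithm is one such First-Fit coloring; hence the number of chains First-Fit uses on $P$ is at most $\FF(G)\leq 8(2k-3)w$, which is the claimed bound.

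There is essentially no obstacle here — the proof is a two-line deduction. If anything, the one thing to be slightly careful about is the edge case $k=2$, where $(2k-3)w=w$ and Corollary~\ref{cor:pathwidth} already specializes to the known fact that a {\kk{2}}-free poset is an interval order whose incomparability graph is an interval graph of clique number at most $w$, so that $\FF(G)\leq 8w$ recovers the Pemmaraju--Raman--Varadarajan bound; this is consistent, so no separate argument is needed. I would therefore just write: ``By Corollary~\ref{cor:pathwidth} the incomparability graph $G$ of $P$ has pathwidth at most $(2k-3)w-1$, so by Corollary~\ref{cor:homomorphism}, $\FF(G)\leq 8(2k-3)w$; since First-Fit on $P$ corresponds to a First-Fit coloring of $G$, the result follows.''
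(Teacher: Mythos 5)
Your proposal is correct and is exactly the paper's argument: the paper derives Theorem~\ref{thm:main} by combining Corollary~\ref{cor:pathwidth} with Corollary~\ref{cor:homomorphism}, using the observation that a First-Fit chain partition of $P$ is a First-Fit coloring of its incomparability graph. Nothing further is needed.
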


\subsection{Proof of lower bound}

The bound in Theorem~\ref{thm:main} is best possible up to a constant factor, as we now show. 

\begin{theorem}
\label{thm:lower_bound}
For every $k\geq 2$ and $w \geq 2$, there exists a 
poset $P$ of width $w$ without {\kk{k}} on which 
First-Fit can be forced to use at least $(k-1)(w-1)$ chains. 
\end{theorem}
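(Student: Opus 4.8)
The plan is to construct the poset $P$ so that its incomparability graph is (essentially) the complete $w$-partite graph with $k-1$ vertices per class, which was already observed above to have pathwidth $(k-1)(w-1)$, and to arrange the elements along an on-line presentation that forces First-Fit to realize this pathwidth as a clique in a First-Fit coloring. Concretely, I would take $w$ pairwise incomparable chains $C_1, \dots, C_w$, each of length $k-1$, which immediately gives width $w$ (any antichain picks at most one element per chain) and no {\kk{k}} (two incomparable chains together span at most two of the $C_i$, hence at most $2(k-1) < 2k$ elements, so neither side can have $k$ elements once $k \ge 2$). The incomparability graph is then complete $w$-partite with parts of size $k-1$, and the target lower bound $(k-1)(w-1)$ is exactly one less than the number of vertices in a maximum "staircase" clique of a natural interval representation of this graph, which is the quantity First-Fit should be pushed to.

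**Next I would describe the adversary's presentation order.** The idea is the standard First-Fit-against-interval-graph lower bound, adapted: present the elements in "rounds" so that at the moment a new element $v$ arrives, every color $1, \dots, (\text{current count})$ already occurs on a chain incomparable with $v$, forcing $v$ into a fresh chain. Since two elements in different $C_i$ are always incomparable, the only constraint is that two elements in the \emph{same} $C_i$ that First-Fit has placed on distinct chains force those chains apart — but elements of the same chain $C_i$ are comparable, so they could in principle share a First-Fit chain; the adversary must prevent this by interleaving. I would present, for each "level" $\l \in \{1, \dots, k-1\}$ and each $i$, the $\l$-th element of $C_i$, in an order (across $i$ and $\l$) chosen so that by induction First-Fit is forced to use a new chain for all but the very first element of each new level and the first chain overall, accumulating $(k-1)w - (w-1) - \dots$; the exact bookkeeping is routine once the induction hypothesis is set up, and one checks it sums to $(k-1)(w-1)$. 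An alternative, cleaner route: invoke Corollary~\ref{cor:pathwidth}'s companion fact that First-Fit on this specific complete multipartite incomparability graph \emph{can} be driven to $\mathrm{pw}+1 = (k-1)(w-1)+1$ colors — but since $\FF$ is not subgraph-monotone I should not appeal to pathwidth directly and instead exhibit the order explicitly.

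**The main obstacle** I anticipate is exactly this: making sure the presentation order genuinely forces First-Fit's hand rather than merely permitting a bad coloring. First-Fit is deterministic given the order, so I must verify by a careful induction on the number of elements presented that when the element of $C_i$ at level $\l$ arrives, each of the colors used so far appears on some already-placed element incomparable to it — which, because incomparability is "different chain $C_{i'}$", reduces to showing every color class so far contains an element outside $C_i$. Arranging the order so that this invariant is maintained while the color count climbs to $(k-1)(w-1)$ requires presenting the levels in a sufficiently "spread out" fashion (e.g. level by level, and within a level cycling through $i = 1, \dots, w$), and then checking the count: the first element gets color $1$ for free, and each of the remaining elements presented while at least one "fresh" chain-index is still available gets forced upward. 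I would lay out the order, state the invariant, prove it by induction, and conclude that First-Fit uses at least $(k-1)(w-1)$ chains; verifying that $P$ has no {\kk{k}} and width exactly $w$ is immediate from the construction as noted above.
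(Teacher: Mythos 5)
There is a fatal flaw in your choice of poset, not merely a gap in the bookkeeping. In your construction $P$ is the disjoint union of $w$ pairwise incomparable chains $C_1,\dots,C_w$, each of size $k-1$. Every chain that First-Fit builds is a chain of $P$, hence entirely contained in a single $C_i$. Now suppose two First-Fit chains $D_a$ and $D_b$ with $a<b$ were both contained in the same $C_i$: by the defining property of a First-Fit chain partition, every element of $D_b$ must be incomparable to some element of $D_a$, which is impossible since all elements of $C_i$ are pairwise comparable. So distinct First-Fit chains live in distinct $C_i$'s, and First-Fit uses at most $w$ chains on this poset \emph{no matter in which order the adversary presents the elements}. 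In particular the invariant you propose (``every color class used so far contains an element outside $C_i$'') can never be sustained past $w$ colors, because each color class is confined to one part and two classes cannot share a part. Since $(k-1)(w-1)>w$ already for $k,w\geq 3$ (and for $w=2$, $k\geq 4$), your construction cannot reach the claimed bound. This is exactly the non-monotonicity phenomenon the paper warns about: the incomparability graph here is the complete $w$-partite graph, whose pathwidth is $(k-1)(w-1)$, but whose First-Fit value is only $w$ (compare the paper's remark that $\FF(K_{n,n})=2$ while $\FF(K_{n,n}-M)=n$). Large pathwidth of the incomparability graph does not by itself give a First-Fit lower bound, and you correctly declined to appeal to pathwidth directly --- but the explicit-order route fails for the same underlying reason.

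The paper's proof avoids this by not using incomparable chains as the building block. For $w=2$ it uses Kierstead's poset $P_{q}$ (with $q=k-1$), whose comparabilities are arranged in a staircase pattern so that First-Fit chains can ``cross'' the two covering chains of $P_q$; presented in the natural order, First-Fit is forced to open $k-1$ chains even though the width is $2$. For $w\geq 3$ it takes $w-1$ copies of $P_{k-1}$ and adds comparabilities between almost all pairs of elements in distinct copies (all except those involving the top level of each copy), which keeps the width at $w$ and excludes {\kk{k}}, while guaranteeing that chains opened for earlier copies can never absorb elements of later copies; concatenating the natural orderings then forces $(k-1)(w-1)$ chains. If you want to salvage your write-up, you need a construction of this kind in which incomparability is \emph{not} simply ``belonging to different ground chains'': the adversary needs comparabilities that prevent First-Fit from reusing a chain, and incomparabilities that are plentiful enough to force each new element below all existing chains, and these two requirements are in direct tension in the complete-multipartite example.
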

\begin{proof}
For $w=2$ this can be shown using a construction due to Kierstead~\cite{K_CM}: 
For $q \geq 2$, define $P_{q}$ as the poset on the
set of elements $V_{q} := \{v_{1,1}, v_{2, 1},  v_{2, 2}, 
v_{3, 1},  v_{3, 2}, v_{3,3}, \mathbf{\dots}, v_{q, 1}, v_{q, 2}, \dots, v_{q, q}\}$, 
where $v_{i,j} < v_{i',j'}$ in $P_{q}$ if and only if 
$i \leq i' - 2$, or $i\in \{i' -1, i'\}$ and  $j \leq j' - 1$. 
See Figure~\ref{fig:P5} for an illustration. 

\fig{0.35}{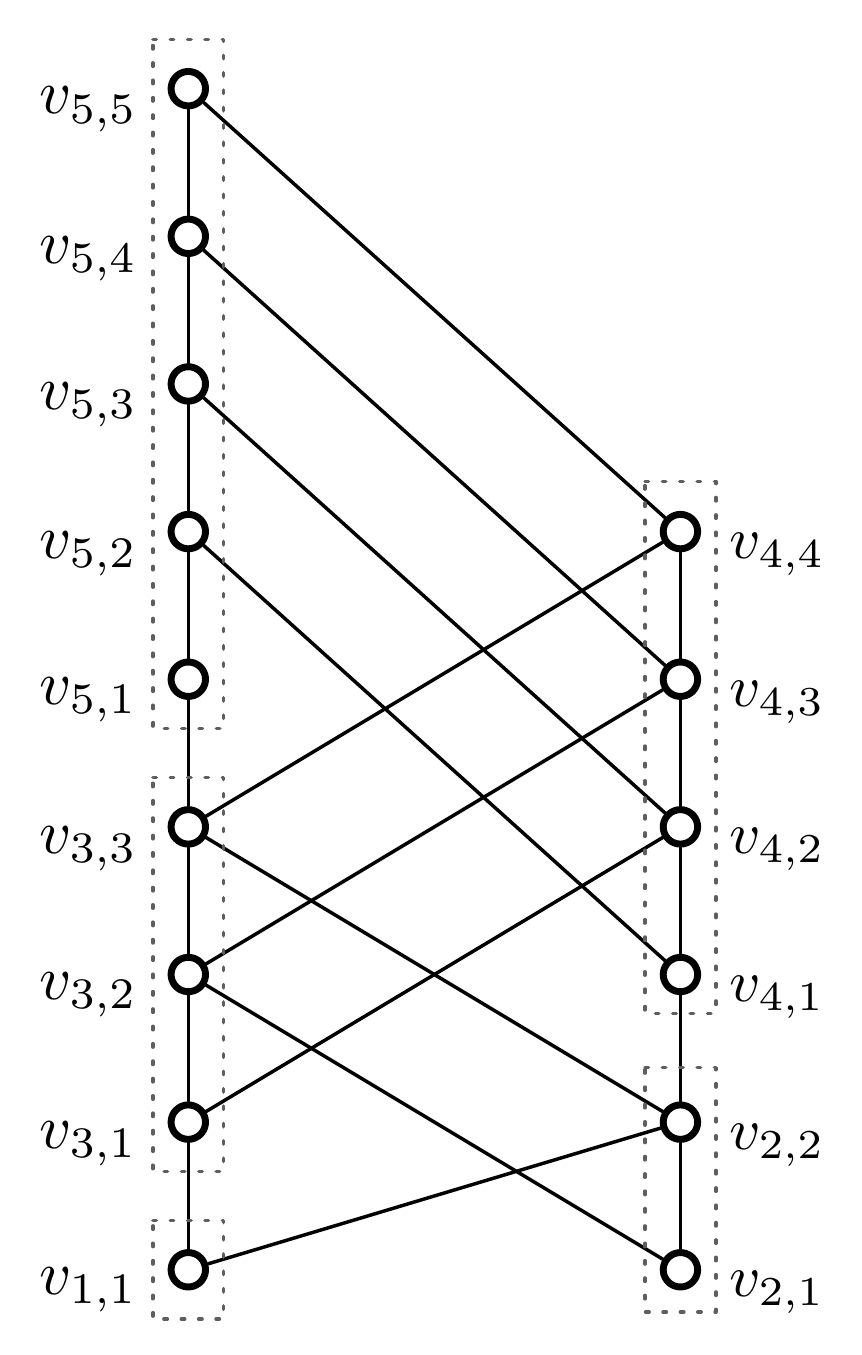}{The Hasse diagram of poset $P_{5}$. 
The dotted rectangles represent the chains $\{v_{i,j}: 1 \leq j \leq i\}$ for 
each $i$.}{fig:P5}

The poset $P_{q}$ has width $2$, because it has two incomparable elements and 
$V_{q}$ can be partitioned into the two chains $\{v_{i,j}: 1 \leq j \leq i\leq q, i \textrm{ odd}\}$
and $\{v_{i,j}: 1 \leq j \leq i \leq q, i \textrm{ even}\}$. 
Consider the ordering of the elements of $P_{q}$ suggested by their indices, namely
$v_{1,1}, v_{2, 1},  v_{2, 2},  \mathbf{\dots}, v_{q, 1}, v_{q, 2}, \dots, v_{q, q}$,  
which we call the {\DEF natural ordering}.  
Given this ordering, observe that First-Fit assigns element $v_{i,j}$ to the $(i-j+1)$-th chain. 
Hence First-Fit uses exactly $q$ chains in total (as proved by Kierstead~\cite{K_CM}).   
Also, every element of $P_{q}$ is incomparable to at most $q$ others, 
implying that $P_{q}$ has no {\kk{(q+1)}}. Therefore, $P_{q}$ with $q=k-1$ is a
poset with the desired properties.  

Now assume $w \geq 3$. We modify Kierstead's construction as follows:  
Take the disjoint union of $w-1$ copies of $P_{k-1}$. Denote by $v^{\l}_{i,j}$ the
element $v_{i,j}$ in the $\l$-th copy of $P_{k-1}$, and add the following
comparisons between elements from distinct copies: For $1 \leq \l < \l' \leq w-1$ and  $i \neq k-1$, 
we have $v^{\l}_{i,j} < v^{\l'}_{i',j'}$. 
Let $Q_{k,w}$ denote the resulting poset. See Figure~\ref{fig:Q} for an illustration. 

\fig{0.6}{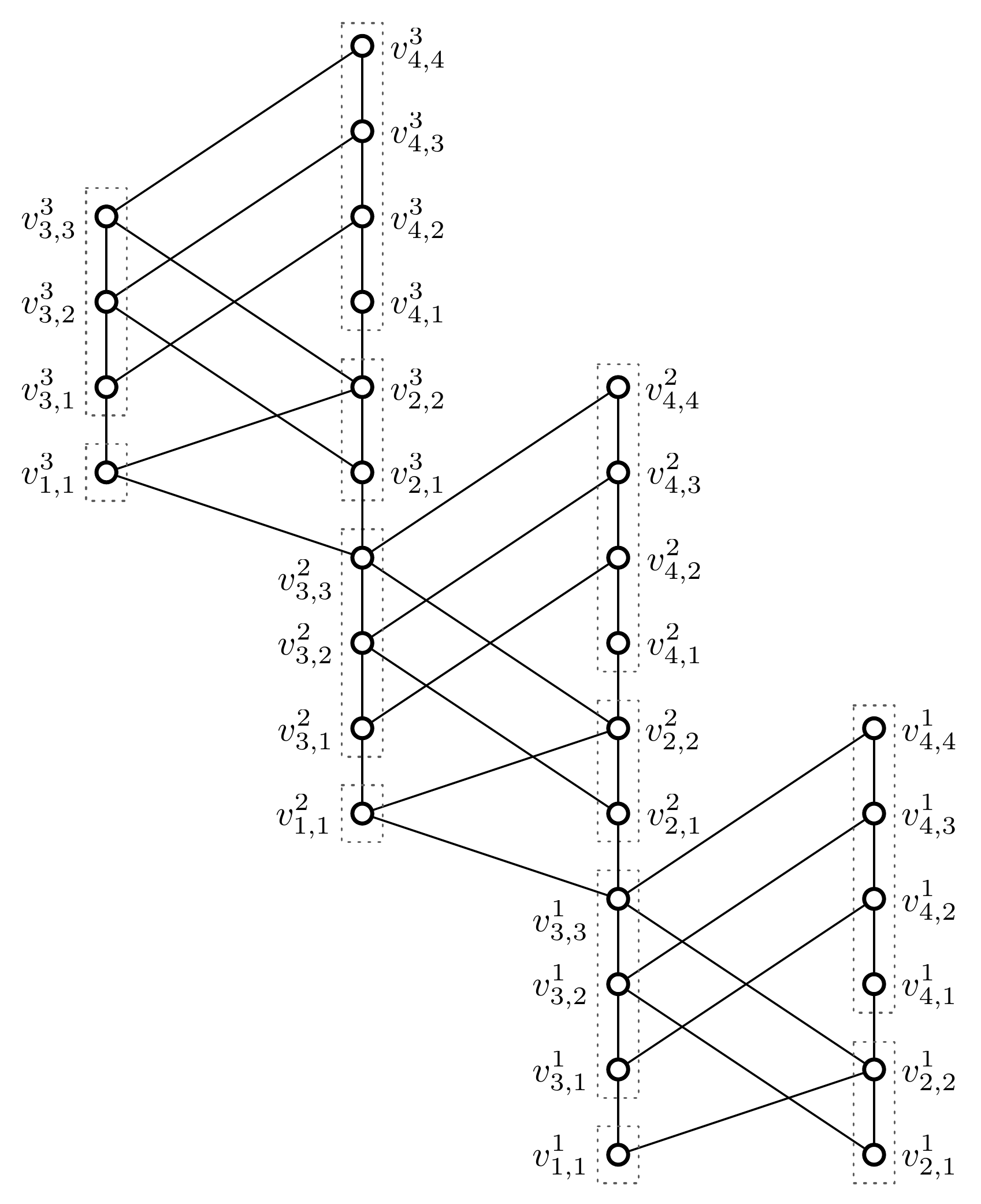}{The Hasse diagram of poset $Q_{5,4}$. The copies of $P_{4}$ are ordered
from bottom-right to top-left.}{fig:Q}

The poset $Q_{k,w}$ has width exactly $w$. 
We claim that $Q_{k,w}$ has no {\kk{k}}.  
Arguing by contradiction, suppose that $A, B$ are two disjoint chains in $Q_{k,w}$
of size $k$ that are incomparable.  
Since the set $X := \{v_{k-1,j}^{\l}: 1\leq j \leq k-1, 1 \leq \l \leq w-1\}$ induces
a poset of height $k-1$, the sets $A - X$ and $B - X$ are not empty. 
Moreover, every $u\in A - X$ and $v \in B - X$ belong to the same copy of $P_{k-1}$, 
as otherwise they would be comparable; thus $A-X$ and $B-X$ are both subsets of the $\l$-th
copy of $P_{k-1}$ for some $\l \in \{1, \dots, w-1\}$. 
Recall that $P_{k-1}$ has no {\kk{k}}, 
implying that at least one of $A, B$, say $A$, 
has an element $x$ that belongs to another copy of $P_{k-1}$, 
say the $\l'$-th one. Thus $x \in X$. 
Let $u\in A - X$ and $v \in B - X$. 
If $\l' < \l$, then $u$ and $x$ are incomparable, contradicting $u, x \in A$. 
If $\l' > \l$, then $x > v$ in $Q_{k,w}$, contradicting the fact that $x, v$ are incomparable. 
Both cases leading to a contradiction, we deduce that the two chains $A, B$ do not exist. 
 
Now, given the ordering
of the elements of $Q_{k,w}$ obtained by concatenating the natural orderings of
the $w-1$ copies of $P_{k-1}$ in order, First-Fit assigns element $v^{\l}_{i,j}$ 
to the $((k-1)(\l-1) + (i-j+1))$-th chain, as is easily checked. 
Hence First-Fit uses $(k-1)(w-1)$ chains in total. 
\end{proof}

\section*{Acknowledgments}
We are grateful to an anonymous 
referee for her/his helpful comments, 
and in particular for pointing out that the bound in Theorem~\ref{thm:extension} could 
be improved from our original bound of $2kw$  to $(2k-3)w$. 

\bibliographystyle{abbrv}
\bibliography{paper}

\end{document}